\providecommand{\keywords}[1]{\textbf{\textit{keywords:}} #1}
\theoremstyle{definition}
\theoremstyle{remark}
\newtheorem{theorem}{Theorem}
\title{Tangential interpolatory projections for a class of second-order index-1 descriptor systems and application to Mechatronics}
\author[1]{Md. Motlubar Rahman}
\author[,2]{M. Monir Uddin\thanks{Corresponding author, monir.uddin@northsouth.edu}}
\author[3]{L. S. Andallah}
\author[4]{Mahtab Uddin}
\affil[1,3]{Department of Mathematics, Jahangirnagar University, Savar, Dhaka-1342, Bangladesh}
\affil[2]{Department of Mathematics and Physics, North south University, Dhaka-1229, Bangladesh}
\affil[4]{Institute of Natural Sciences, United International University, Dhaka-1212, Bangladesh}
\date{}
\begin{document}

\maketitle

\begin{abstract}
This paper studies the model order reduction of second-order index-1 descriptor systems using a tangential interpolation projection method based on the Iterative Rational Krylov Algorithm (IRKA). Our primary focus is to reduce the system into a second-order form so that the structure of the original system can be preserved. For this purpose, the IRKA based tangential interpolatory method is modified to deal with the second-order structure of the underlying descriptor system efficiently in an implicit way. The paper also shows that by exploiting the symmetric properties of the system the implementing computational costs can be reduced significantly.  Theoretical results are verified for the model reduction of the piezo actuator based adaptive spindle support which is second-order index-1 differential-algebraic form. The efficiency and accuracy of the method are demonstrated by analyzing the numerical results. 
\end{abstract}

\begin{center}
\keywords{Interpolatory projections, Iterative Rational Krylov Algorithm,   
structure-preserving model order reduction, second-order index-1 systems, 
piezo actuator based adaptive spindle support}
\end{center}

\section{Introduction} \label{sec:introduction}
We discuss the Iterative Rational Krylov Algorithm (IRKA) based 
tangential interpolation projection technique for the model reduction of 
second-order differential algebraic equations (DAEs) together with output 
equation which are given by
\begin{subequations} \label{eq:int:systemequation}
\begin{align}
     M_{11}\ddot{v}(t)+ L_{11} \dot{v}(t)+K_{11}v(t)+K_{12} \eta(t) & = F_1 u(t),\label{eq:int:systemequation1} \\
     K_{21}v(t)+K_{22} \eta(t)& = F_2 u(t), \label{eq:int:systemequation2}\\
     H_1 v(t)+ H_2 \eta(t)+ D_a u(t)& = y(t) ,\label{eq:int:systemequation3}
\end{align}        
\end{subequations} 
where $v(t)\in \mathbb{R}^{n_1}$, $\eta(t)\in\mathbb{R}^{n_2}$ are the states,
$u(t)\in \mathbb{R}^{m}$ are the inputs and $y(t)\in \mathbb{R}^{p}$ are the outputs, and matrices $M_{11}, L_{11},  K_{11}, K_{12}, K_{21}$ and $K_{22}$ are  sparse. The matrix $D_a\in\mathbb{R}^{p\times m}$ represents the direct feed-through from the input to the output. We consider that number of inputs and outputs is greater than one i.e., the system is multi-inputs and multi-outputs (MIMO). We also assume that the block matrix $K_{22}$ is non-singular. In the previous literature see, e.g., \cite{morUddSKetal12} such system was defined as index-1 system. This system is called symmetric if the matrices $M_{11}$, $L_{11}$, $K_{11}$, $K_{22}$ and $D_a$ are symmetric, and $K_{21}=K_{12}^T$, $H_1= F_1^T$ and $H_2= F_2^T$.

Such structure systems arise in many applications, for examples in the modeling of the mechanical and electrical networks (see e.g., \cite{EicF98,fuchs2011power,neugebauer2010control}) where the constraints are imposed to control the dynamic behavior of the systems or mechatronics \cite{zaeh2011prediction,NeuDW07} in which mechanical and electrical components are coupled with each other. In the specific case of the model example which is used for our numerical experiments is mechatronics, the index-1 character results from the certain machine tools; Adaptive Spindle Support (ASS) \cite{drossel2008,neugebauer2010} based on piezo actuators. 
\begin{figure}[!htbp]
\begin{center}
\includegraphics[width=100mm,height=60mm]{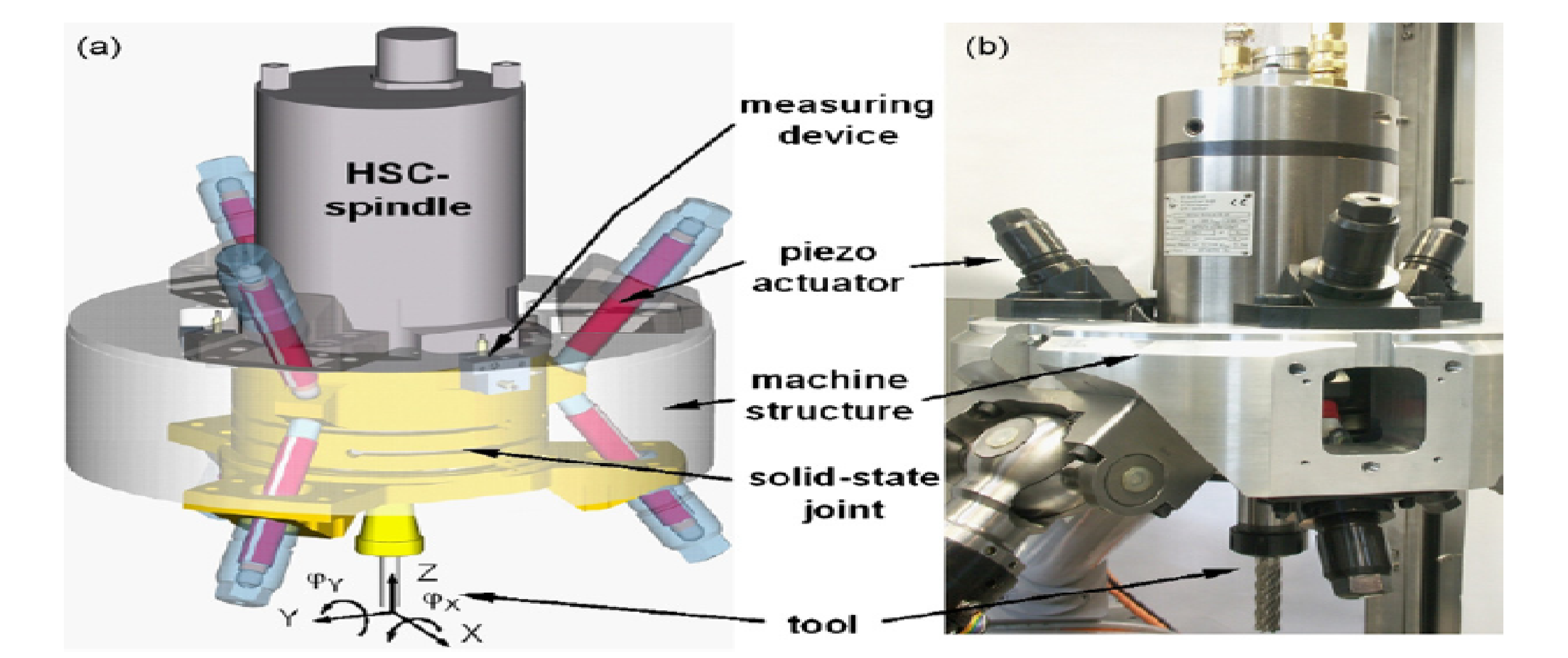}
\caption{(a) Piezo-actuator based Adaptive Spindle Support (ASS) and (b) real component mounted on the test bench 3pod (Source \cite{drossel2008}).}
\label{fig:pizomechsystem}
\end{center}
\end{figure}
Piezo-actuator-based ASS as shown in  Figure~\ref{fig:pizomechsystem}(a) is a machine tool that is mounted in a parallel kinematic machine shown in Figure~\ref{fig:pizomechsystem}(b), to attain the additional positioning freedom during machining operations. The detail of such a complex mechatronic model can be found, for example in \cite{drossel2008,neugebauer2010} for more detail.  The purpose of the piezo-sensor and piezo-actuator is to control active vibration or shunt damping so that a high-quality product can ensure that is an indispensable characteristic of production engineering in the commercial sense.  For analyzing the mechanical design and performance of the ASS, using the Finite Element Method (FEM) a mathematical model as defined in (\ref{eq:int:systemequation}) was formed, where the time-dependent state vector $v(t)$ consists of the components of mechanical displacements, $\eta(t)$ is the electrical charges, and $M_{11}$, $L_{11}$, and $K_{11}$ are the mass, damping, and stiffness, respectively. Moreover, the block  $K_{22}$ is electrical, and $K_{21}=K_{12}^{T}$ are coupling terms, the general force quantities (mechanical forces, and electrical charges) are chosen as the input quantities $u$, and the corresponding general displacements (mechanical displacements and electrical potential) are the output quantities $y$. The total mass matrix contains zeros at the locations of electrical potential. More precisely, the electrical potential of piezo-mechanical systems (Degrees of Freedom (DoF) for the electrical part) is not associated with inertia. The equation of motion of the mechanical system in (\ref{eq:int:systemequation}) can be found in \cite{neugebauer2010making}. This equation results from a finite element discretization of the balance equations. For piezo-mechanical systems, these are the mechanical balance of momentum (with inertia term) and the electro-static balance. From this, the electrical potential without inertia term is obtained. Thus, for the whole system (mechanical and electrical DoF) the mass matrix has a rank deficiency.
 
If the model is too large, performing the simulation with it has prohibitively expensive computational effort, or is simply impossible due to the limited  computer memory. Therefore, we want to replace a large-scale system with a substantially small-scale system that approximates the main features of the original systems but is much faster to evaluate. Model Order Reduction (MOR) has a vital impact on scientific research to accumulate the computational approaches to the engineering fields, especially, industrial applications. The time-management is the key feature in modern production systems and MOR techniques can play an essential role by reducing computational costs. Numerical techniques via computer simulation can act as a bridge between the factory production and scientific research.

Model Order Reduction (MOR) of the index-1 descriptor system (\ref{eq:int:systemequation}) was studied in several literature, see, e.g., \cite{morUddSKetal12,morUdd11,morBenSU12,morUdd19}. All these literature focused on the system theoretic method Balanced Truncation (BT) considering either  second-order to first-order or second-order to second-order reduction techniques. To implement the method one has to compute and store the Gramian factors of the system. Computing the Gramian factors by solving continuous-time algebraic Lyapunov equations is a huge computational task and is often considered as a drawback of the method.

On the other hand, Iterative Rational Krylov Algorithm (IRKA) based interpolatory methods as introduced in \cite{morGugAB08,morAntBG10} is computationally efficient. Therefore, recently this method is applied frequently for the model reduction of large-scale dynamical systems. The method was generalized for first-order descriptor system in \cite{morGugSW13}. The idea was also extended in \cite{morBeaG09,morWya12,motlubarspmor2020} for the second-order to second-order reduction of second-order standard systems. Authors in \cite{benner2016structure} discussed this method to obtain reduced first-order state-space model from the second-order index-1 system in (\ref{eq:int:systemequation}). Until now there is no investigation of this method for the second-order to second-order model reduction of second-order index-1 systems. This paper is mainly devoted to close this gap.

In this  paper, we will discuss the Structure-Preserving Model Order Reduction (SPMOR), i.e., second-order to second-order model order reduction of the second-order index-1 descriptor systems applying tangential interpolation projection based on IRKA. Generally, the second-order index-1 system (\ref{eq:int:systemequation}) can be converted into a second-order standard system. Then the proposed method can be applied to the converted system following the procedure as in \cite{morWya12}. However, such conversion will destroy the sparsity pattern and turn the system into a dense form. The dense system not only consumes a large-scale computer memory but also leads to additional computational complexities. For a large-scale system, like the Adaptive Spindle Support (ASS) model consider in this paper, converting into dense form is forbidden. We develop the SPMOR algorithm for the  system (\ref{eq:int:systemequation}) without converting the system dense form explicitly. For this purpose, the standard IRKA based interpolatory methods as in \cite{morBeaG09} would be modified  to deal with the second-order structure. In many cases, in real-life applications, see, e.g. \cite{benner2013improved,wagner2003symmetric,chahlaoui2006model}, the model we use in the numerical experiments, systems are in symmetric form. This paper also shows how to accelerate the computation by exploiting the symmetric properties of the system. The proposed techniques are applied to the large-scale real-life model, piezo actuator based adaptive spindle support. The efficiency of the method is discussed  by the numerical results. The results are also compared with that of the Balanced Truncation. Note that for the balancing-based model reduction we have considered the procedures exactly presented in \cite{benner2016structure} and to avoid the elaborations we have not discussed this in the paper again.    

\section{IRKA based tangential interpolatory methods}\label{sec:standard}
The goal of this section is to review the basic idea of the tangential interpolation techniques based on IRKA from the previous literature. At first, we introduce the method for the first-order generalized systems. Then the idea would be generalized for the second-order standard systems. This section also recalls some important definitions and essential notations, theorems, etc.,  that will be used in the next sections.   

\subsection{Tangential interpolation for first-order systems}\label{subsec:1st_order_standard}
We briefly discuss the IRKA based tangential interpolation method for the MIMO generalized state-space system
\begin{equation}\label{eq:standard:ltisystem}
\begin{aligned}
E\dot x(t) &= A x(t)+B u(t),\\
y(t) &= C x(t)+ D_a u(t),
\end{aligned}
\end{equation}
where $ E \in \mathbb{R}^{k\times k}$ is non-singular, 
and $ A\in\mathbb R ^{k\times k}$, $B\in\mathbb R^{k\times p}$,
$C\in\mathbb R ^{m\times k}$ and ${D}_a\in\mathbb R^{m\times p}$. 
The transfer function of the system (\ref{eq:standard:ltisystem}) is defined by $\text{G}(s)= C (sE-A)^{-1}B + D_a$, where $s\in \mathbb{C}$. Applying the tangential interpolatory framework we want to construct an $r$-dimensional ($r\ll k$) reduced-order model
\begin{equation}
\label{eq:standard:reducedltisystem}
\begin{aligned}
\hat{E}\dot {\hat{x}}(t) &= \hat{A} \hat{x}(t)+\hat{B} u(t),\\ 
\hat{y}(t) &=\hat{C}\hat{x}(t)+ \hat{D}_a u(t),
\end{aligned}
\end{equation}
such that its transfer 
function $\hat{\text{G}}(s) = \hat{C} (s\hat{E}-\hat{A})^{-1}\hat{B} + \hat{D}_a$ 
interpolates the original one, $\text{G}(s)$, at selected
points in the complex plane along with selected directions. The
points are called interpolation points and the directions are called tangential directions. We use the procedure illustrated in \cite{morBeaG09} to make this problem more precisely as follows. 

Initially, we consider a set of ad-hoc interpolation points $\left\{\alpha_i\right\}_{i=1}^{r}$, right tangential directions $\left\{b_i\right\}_{i=1}^{r}$ and left tangential directions $\left\{c_i\right\}_{i=1}^{r}$ to construct two $n\times r$ projection matrices 
\begin{equation}\label{eq:standard:solution1}
\begin{aligned}
V & = \left[(\alpha_1 E-A)^{-1}Bb_1, \cdots,(\alpha_r E-A)^{-1}Bb_r\right],\\
W & = \left[(\alpha_1E-A)^{-T}C^Tc_1, \cdots,(\alpha_rE-A)^{-T}C^Tc_r\right].
\end{aligned}
\end{equation}
Then the interpolation points and those tangential directions need to be updated until the reduced transfer function interpolates the original transfer function reasonably. Since the continuous updates of the interpolation points gradually match the eigenvalues of the system, the initial ad-hoc consideration will not affect the convergence of the approach.

Now, approximating $x(t)$ by $V\hat{x}(t)$ and enforcing the Petrov-Galerkin condition provided as
$$
W^T(EV\dot{\hat{x}}(t)-AV\hat{x}(t)-Bu(t))=0, \quad \hat{y}(t)=CV\hat{x}(t)+D_a u(t),
$$
construct the reduced matrices in (\ref{eq:standard:reducedltisystem})
as
\begin{equation}
\label{eq:standard:reducedmatrices}
\begin{aligned}
\hat{E}:= {W}^T  E V,\quad \hat{A}:= W^T A V, \quad
\hat{B}:= W ^T B , \quad \hat{C}:= C V, \quad \hat{ D}_a:=  D_a.
\end{aligned}
\end{equation}

The reduced  model obtained by this procedure satisfies
\begin{align}\label{eq:standard:hermitcond}
\text{G}(\alpha_i)b_i=\hat{\text{G}}(\alpha_i)b_i,\,
c_i^T\text{G}(\alpha_i)b_i=c_i^T\hat{\text{G}}(\alpha_i)b_i \,\, \text{and}\,\,
c_i^T\text{G}'(\alpha_i)b_i=c_i^T\hat{\text{G}}'(\alpha_i)b_i,
\end{align}
for $i = 1,2, \dots , r,$ which is known as Hermite bi-tangential interpolation conditions.

The quality of the reduced-order model (ROM) can be measured by $|y-\hat{y}|$,
which can, in frequency domain, also be expressed in terms of the transfer function error
\begin{align}
\| \text{G}(.)-\hat{\text{G}}(.)\|.
\end{align}

Common choices for the error norm are the $\mathcal{H}_\infty$ or $\mathcal{H}_2$-norms (see, e.g. \cite{morAnt05}).
To  minimize the error, the choice of interpolation points and tangential directions are crucial tasks. They depend on the reduced-order model; hence are not known priory. The Iterative Rational Krylov Algorithm (IRKA) introduced in \cite{morGugAB08} resolves the problem by iteratively correcting the interpolation points and the directions as summarized in Algorithm~\ref{alg:irka1}.
\begin{algorithm}[t]
	\SetAlgoLined
	\SetKwInOut{Input}{Input}
	\SetKwInOut{Output}{Output}
	\caption{IRKA for First-Order MIMO Systems.}
	\label{alg:irka1}
	\Input {$E, A, B, C, D_a$.}
	\Output{$\hat{E}, \hat{A}, \hat{B}, \hat{C}$, $\hat{D}_a:= D_a$.}
	Make the initial selection of the interpolation points $\{\alpha_i\}_{i=1}^r$ and the tangential directions $\{b_i\}_{i=1}^r$ and $\{c_i\}_{i=1}^r$.\\
	Construct
	\begin{align*}
	&V = \begin{bmatrix}(\alpha_1E-A)^{-1}Bb_1,\cdots, 
	(\alpha_r E-A)^{-1}Bb_r \end{bmatrix}, \\
	&W = \begin{bmatrix}(\alpha_1 E^T-A^T)^{-1}C^T c_1,\cdots ,
	(\alpha_r E^T-A^T)^{-1}C^T c_r \end{bmatrix}.
	\end{align*}\\
	\While{(not converged)}{%
	
	Compute $\hat{E} = W^T E V$, $\hat{A} = W^T A V$, $\hat{B} = W^T B$ and $\hat{C} = C V$.\\
	Compute $\hat{A}z_i = \lambda_i\hat{E}z_i$ and $y^*_i \hat{A} = \lambda_i y^*_i \hat{E}$ for $\alpha_i \leftarrow -\lambda_i$, $b^*_i \leftarrow -y^*_i \hat{B}$ and $c^*_i \leftarrow \hat{C}z^*_i$, for $i=1,\cdots ,r$.\\
	Repeat step~2. \\
	}
	Construct the reduced-order matrices
	$\hat{E} = W^T E V, \hat{A} = W^T A V, \hat{B} = W^TB$ and $\hat{C} = C V$.\\
\end{algorithm}
\subsection{Tangential interpolation for standard second-order systems}\label{subsec:itmsecstd}
Let us move to review of second-order linear time-invariant (LTI) continuous-time system 
\begin{align}\label{eq:tisec:systemequation}
     M\ddot{z}(t)+ L \dot{z}(t)+Kz(t) & = F u(t),\quad 
     y(t) = H z(t)+D_au(t),
\end{align}          
where $M, L$ and $K$ are non-singular, and $z(t)$ is the $n$ dimensional 
state vector. Consider that the system is MIMO, and its transfer function can be defined as
\begin{align}\label{eq:tisec:stsectf}
 \tilde{G}(s)= H (s^2M+sL+K)^{-1}F + D_a; \quad s\in \mathbb{C}.
\end{align}

Transform the system (\ref{eq:tisec:systemequation}) into an equivalent first-order form (\ref{eq:standard:ltisystem}), in which 
$x(t) = \left[ {\dot{z}(t)}^T \, {z(t)}^T \right]^T$ and the coefficient matrices are replaced by 
\begin{equation}\label{eq:tisec:secondchoice}
\tilde{E}:= \underbrace{\begin{bmatrix}
	
   0 & M\\
   M & L
   \end{bmatrix}}_{E}, 
\tilde{A}:= \underbrace{\begin{bmatrix}
      M &  0\\
       0 &  -K
    \end{bmatrix}}_{A},~
\tilde{B}:= \underbrace{\begin{bmatrix}
       0\\F
    \end{bmatrix}}_{B}, 
\tilde{C}:=
      \underbrace{\begin{bmatrix}
       0 & H
      \end{bmatrix}}_{C}~ \text{and}~ D_a= D_s.
\end{equation}

Although there are several first-order representations of the second-order system as shown in \cite{morSal05}, we are particularly interested in this form (\ref{eq:tisec:secondchoice}); since this representation yields first-order symmetric system if $M$, $L$, $K$ are symmetric, $F$ and $H$ are transposes of each other. Once the system in (\ref{eq:tisec:systemequation}) is converted into the system in (\ref{eq:tisec:secondchoice}), Algorithm~\ref{alg:irka1} can be applied to obtain a reduced-order model. However, the reduced-order model is in first-order form and one can not go back to the second-order representation since the second-order structure has already disintegrated. Therefore we aim to obtain an $r-$dimensional $(r\ll n)$ second-order reduced model
\begin{align}\label{eq:tisec:reducedsystem}
     \hat{M}\ddot{\hat{z}}(t)+ \hat{L} \dot{\hat{z}}(t)+\hat{K}\hat{z}(t) & = \hat{F} u(t),\quad 
     \hat{y}(t) = \hat{H} \hat{z}(t)+ \hat{D}_a u(t),
\end{align}
where using the projection matrices $V_s,W_s \in \mathbb{R}^{n\times r}$, the coefficient matrices 
are obtained as follows
\begin{equation}
\begin{aligned}
 \hat{M} &=W_s^T M V_s, \ \hat{L}= W_s^T L V_s, \ \hat{K}= W_s^T K V_s,\\ \hat{F} &=W_s^T F, \ \hat{H}=HV_s\ \text{and}\ \hat{D}_a:=D_a. 
\end{aligned}
\end{equation}

We want to achieve this by applying tangential interpolatory techniques. It can be shown that the transfer function of the second-order 
system (\ref{eq:tisec:systemequation})  coincides with the transfer function of its first-order representation in (\ref{eq:tisec:secondchoice}), i.e., 
\begin{align*}
 \tilde{G}(s)= H (s^2M+sL+K)^{-1}F+D_a = \tilde{C} (s\tilde{E}-\tilde{A})^{-1}\tilde{B}+D_s.
\end{align*}

Therefore, based on the discussion above the interpolatory projection method can directly be applied to (\ref{eq:tisec:systemequation}) for the reduced model in (\ref{eq:tisec:reducedsystem}).
Considering interpolation points 
$\left\{\alpha_i\right\}_{i=1}^{r}$, right tangential directions
$\left\{b_i\right\}_{i=1}^{r}$ and left tangential directions 
$\left\{c_i\right\}_{i=1}^{r}$, and construct $V_s$ and $W_s$ as follows
\begin{equation}\label{eq:standard:solution2}
\begin{aligned}
V_s & = \left[(\alpha_1^2 M+\alpha_1L+K)^{-1}Fb_1, \cdots,(\alpha_r^2 M+\alpha_rL+K)^{-1}Fb_r\right],\\
W_s & = \left[(\alpha_1^2 M+\alpha_1L+K)^{-T}H^Tc_1, \cdots,(\alpha_r^2 M+\alpha_rL+K)^{-T}H^Tc_r\right].
\end{aligned}
\end{equation}

If the reduced-order model (\ref{eq:tisec:reducedsystem}) is constructed by  $V_s$ and $W_s$,
the reduced transfer function $\hat{\tilde{\text{G}}}(s)= \hat{H} (s^2\hat{M}+s\hat{L}+\hat{K})^{-1}\hat{F}+\hat{D}_a$
tangentially interpolates $\tilde{\text{G}}(s)$ satisfying the  interpolation conditions as in (\ref{eq:standard:hermitcond}).
  
In some articles, see, e.g., \cite{morWya12,xu2018structure} the SPMOR of the second-order system via tangential interpolations were discussed from the first-order representations as in (\ref{eq:tisec:secondchoice}). There the authors discussed that how the interpolation points and the tangential directions for the second-order system can be efficiently updated by the corresponding first-order form using Algorithm~\ref{alg:irka1}.

Note that if the second-order system (\ref{eq:tisec:systemequation})) is symmetric the projection matrices $V_s$ and $W_s$  have coincided. In that case, we can reduce the computational cost to construct the reduced models. Another important issue for the SPMOR is to update the interpolation points and tangential directions. We leave this to discuss in the next section.

\section{SPMOR for second-order index-1 descriptor systems}\label{sec:spmor-secordIndex1}
In this section, our goal is to develop interpolatory projections for SPMOR 
of second-order index-1 DAEs (\ref{eq:int:systemequation}). In Section~\ref{sec:introduction}, we already have mentioned that second-order index-1 DAEs can be converted into a second-order standard system. In a large-scale system, this conversion is however infeasible. This section is mainly devoted without such converting how to apply the tangential interpolatory methods for the SPMOR of second-order DAEs.

\subsection{IRKA based sparse tangential interpolation} \label{sec:sparseIRKA}
Recall the second-order index-1 system (\ref{eq:int:systemequation}), and rewrite the system in Matrix-vector form:
\begin{subequations}
\label{eq:spmor-secordIndex1:matrixvecform}
\begin{align}
\underbrace{\begin{bmatrix} M_{11} & 0 \\0 & 0 \end{bmatrix}}_{\bar{M}} \begin{bmatrix} \ddot v (t) \\ \ddot \eta (t) \end{bmatrix} + 
\underbrace{\begin{bmatrix} L_{11} & 0 \\ 0 & 0 \end{bmatrix}}_{\bar{L}} \begin{bmatrix}\dot v (t) \\ \dot \eta (t) \end{bmatrix} &+ 
\underbrace{\begin{bmatrix} K_{11} & K_{12} \\ K_{21} & K_{22} \end{bmatrix}}_{\bar{K}} \begin{bmatrix} v (t) \\ \eta (t) \end{bmatrix} = 
\underbrace{\begin{bmatrix} F_1 \\ F_2 \end{bmatrix}}_{\bar{F}} u(t),\label{eq:spmor-secordIndex1:matrixvecform1}\\
y(t) &= \underbrace{\begin{bmatrix} H_1 & H_2 \end{bmatrix}}_{\bar{H}} \begin{bmatrix} v (t) \\ \eta (t) \end{bmatrix}+D_a u(t).\label{eq:spmor-secordIndex1:matrixvecform2}
\end{align}
\end{subequations}
The transfer function matrix of the system is defined by 
\begin{align}\label{eq:spmor-secordIndex1:tfindex1}
 \bar{G}(s)= \bar{H} (s^2\bar{M}+s\bar{L}+\bar{K})^{-1}\bar{F}+D_a. 
\end{align}
From second equation of  (\ref{eq:spmor-secordIndex1:matrixvecform1}) we obtain
$$ \eta(t) = -K_{22}^{-1}K_{21}v(t)+ K_{22}^{-1}F_2 u(t).$$
Inserting this identity into the first equation of (\ref{eq:spmor-secordIndex1:matrixvecform1}) 
and equation (\ref{eq:spmor-secordIndex1:matrixvecform2}), and some algebraic manipulations yield
\begin{align}\label{eq:spmor-secordIndex1:systemequation}
    \mathcal{M}\ddot{v}(t)+ \mathcal{L} \dot{v}(t)+\mathcal{K}v(t)  = \mathcal{F} u(t),\quad \text{and}\quad
     y(t) = \mathcal{H} v(t)+\mathcal{D}_au(t),
\end{align}
respectively, where
\begin{equation}  
\label{eq:spmor-secordIndex1:schurcomp}
\begin{aligned}
\mathcal{M}:& = M_{11}, \quad \mathcal{L}: = L_{11} \\
\mathcal{K}: &= K_{11} - K_{12}{K_{22}}^{-1}K_{21}, \quad  
\mathcal{F}: = F_1 - K_{12}{K_{22}}^{-1}F_2, \\
\mathcal{H}: &= H_1 - H_2{K_{22}}^{-1}K_{21},\quad
\mathcal{D}_a: = D_a + H_2{K_{22}}^{-1}F_2.
\end{aligned}
\end{equation}

This system is LTI  continuous-time system and can be compared with the standard second-order system as in (\ref{eq:tisec:systemequation}). The transfer function matrix of the system (\ref{eq:spmor-secordIndex1:systemequation}) is given by 
\begin{align}\label{eq:spmor-secordIndex1:tfstand}
\mathcal{G}(s)= \mathcal{H} (s^2\mathcal{M}+s\mathcal{L}+\mathcal{K})^{-1}\mathcal{F}+\mathcal{D}_a. 
\end{align}

The following observation shows that systems (\ref{eq:spmor-secordIndex1:matrixvecform}) and 
(\ref{eq:spmor-secordIndex1:systemequation}) are equivalent.

\begin{theorem}\label{eq:spmor-secordIndex1:thm}
The transfer-function matrices $\bar{G}(s)$ and $\mathcal{G}(s)$ as defined in 
(\ref{eq:spmor-secordIndex1:tfindex1}) and (\ref{eq:spmor-secordIndex1:tfstand}), respectively
are equal. 
\end{theorem}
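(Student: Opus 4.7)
The plan is to prove the identity $\bar{G}(s)=\mathcal{G}(s)$ by directly inverting the block pencil $s^{2}\bar{M}+s\bar{L}+\bar{K}$ via a Schur-complement formula and matching the resulting expression, term by term, with the right-hand side of (\ref{eq:spmor-secordIndex1:tfstand}).

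First, I would write
\[
s^{2}\bar{M}+s\bar{L}+\bar{K}=\begin{bmatrix} P(s) & K_{12}\\ K_{21} & K_{22}\end{bmatrix},\qquad P(s):=s^{2}M_{11}+sL_{11}+K_{11}.
\]
Since $K_{22}$ is assumed non-singular, the standard $2\times 2$ block-inversion formula with the Schur complement of $K_{22}$ applies. The Schur complement is exactly
\[
S(s):=P(s)-K_{12}K_{22}^{-1}K_{21}=s^{2}\mathcal{M}+s\mathcal{L}+\mathcal{K},
\]
by the definitions in (\ref{eq:spmor-secordIndex1:schurcomp}); this is the key algebraic observation that lets the index-1 pencil collapse onto the reduced second-order pencil.

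Next, I would carry out the multiplication $\bar{H}\,(s^{2}\bar{M}+s\bar{L}+\bar{K})^{-1}\bar{F}$ using the explicit block inverse. Multiplying $\bar{H}=[H_{1}\ H_{2}]$ on the left, the first block-row coefficient becomes $(H_{1}-H_{2}K_{22}^{-1}K_{21})S(s)^{-1}=\mathcal{H}\,S(s)^{-1}$, and the second block-row coefficient simplifies analogously to $-\mathcal{H}\,S(s)^{-1}K_{12}K_{22}^{-1}+H_{2}K_{22}^{-1}$. Multiplying on the right by $\bar{F}=[F_{1}^{T}\ F_{2}^{T}]^{T}$ and grouping the two $\mathcal{H}\,S(s)^{-1}$ terms, the factor $F_{1}-K_{12}K_{22}^{-1}F_{2}=\mathcal{F}$ appears, leaving the residual term $H_{2}K_{22}^{-1}F_{2}$. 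Hence
\[
\bar{H}\,(s^{2}\bar{M}+s\bar{L}+\bar{K})^{-1}\bar{F}=\mathcal{H}\,S(s)^{-1}\mathcal{F}+H_{2}K_{22}^{-1}F_{2}.
\]

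Finally I would add the feedthrough $D_{a}$ to both sides and recognize, via (\ref{eq:spmor-secordIndex1:schurcomp}), that $D_{a}+H_{2}K_{22}^{-1}F_{2}=\mathcal{D}_{a}$, whereupon $\bar{G}(s)=\mathcal{H}(s^{2}\mathcal{M}+s\mathcal{L}+\mathcal{K})^{-1}\mathcal{F}+\mathcal{D}_{a}=\mathcal{G}(s)$. The only real obstacle is careful book-keeping in the block multiplication: one must be attentive to the fact that the apparently ``extra'' feedthrough contribution $H_{2}K_{22}^{-1}F_{2}$ generated by eliminating $\eta(t)$ is absorbed into $\mathcal{D}_{a}$ rather than into $\mathcal{H}\,S(s)^{-1}\mathcal{F}$, otherwise the two transfer functions would disagree by a constant. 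No regularity assumption beyond invertibility of $K_{22}$ and of the full pencil on the domain of $s$ under consideration is needed, so the identity holds for all $s\in\mathbb{C}$ at which both sides are defined.
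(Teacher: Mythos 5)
Your proof is correct and follows essentially the same route as the paper's: both eliminate the algebraic block through $K_{22}^{-1}$, identify the Schur complement $P(s)-K_{12}K_{22}^{-1}K_{21}$ with $s^{2}\mathcal{M}+s\mathcal{L}+\mathcal{K}$, and absorb the residual term $H_{2}K_{22}^{-1}F_{2}$ into $\mathcal{D}_a$. The only difference is presentational: the paper solves the block linear system for auxiliary vectors $\textbf{x}_1,\textbf{x}_2$ by substitution, whereas you invoke the closed-form block-inverse formula directly, which is the same computation packaged differently.
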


\begin{proof}
Plugging $\bar{H}$, $\bar{M}$, $\bar{D}$, $\bar{K}$ and $\bar{L}$ from 
(\ref{eq:spmor-secordIndex1:matrixvecform}) into (\ref{eq:spmor-secordIndex1:tfindex1}) we obtain 
\begin{align}\label{eq:spmor-secordIndex1:thm1}
\bar{G}(s) & = \begin{bmatrix} H_1 & H_2 \end{bmatrix} \left(s^2\begin{bmatrix} M_{11} & 0 \\0 & 0 \end{bmatrix}+s\begin{bmatrix} L_{11} & 0 \\ 0 & 0 \end{bmatrix}+\begin{bmatrix} K_{11} & K_{12} \\ K_{21} & K_{22} \end{bmatrix}\right)^{-1}\begin{bmatrix} F_1 \\ F_2 \end{bmatrix}+D_a\nonumber\\ 
& = \begin{bmatrix} H_1 & H_2 \end{bmatrix} \begin{bmatrix}s^2 M_{11}+ sL_{11}+K_{11} & K_{12} \\K_{21} & K_{22} \end{bmatrix}^{-1}\begin{bmatrix} F_1 \\ F_2 \end{bmatrix}+D_a. 
\end{align}
 
Consider that 
\begin{align*}
\begin{bmatrix}s^2 M_{11}+ sL_{11}+K_{11} & K_{12} \\K_{21} & K_{22} \end{bmatrix}^{-1}\begin{bmatrix} F_1 \\ F_2 \end{bmatrix}&= \begin{bmatrix} \textbf{x}_1 \\ \textbf{x}_2 \end{bmatrix},
\end{align*}
which leads
\begin{align*}
\begin{bmatrix}s^2 M_{11}+ sL_{11}+K_{11} & K_{12} \\K_{21} & K_{22} \end{bmatrix}\begin{bmatrix}  \textbf{x}_1 \\ \textbf{x}_2 \end{bmatrix} = \begin{bmatrix} F_1 \\ F_2 \end{bmatrix}.
\end{align*} 

This implies
\begin{align}
(s^2 M_{11}+ sL_{11}+K_{11})\textbf{x}_1 +  K_{12}\textbf{x}_2 & = F_1, \label{eq:spmor-secordIndex1:lsys1}\\
K_{21}\textbf{x}_1+ K_{22} \textbf{x}_2 & =  F_2. \label{eq:spmor-secordIndex1:lsys2}
\end{align}

Equation (\ref{eq:spmor-secordIndex1:lsys2}) gives
\begin{align*}
\textbf{x}_2 = -K_{22}^{-1}K_{21}\textbf{x}_1 + K_{22}^{-1}F_2.
\end{align*}

Inserting this identity into equation
(\ref{eq:spmor-secordIndex1:lsys1}) we have
 \begin{align}
\textbf{x}_1 = (s^2 M_{11}+ sL_{11}+K_{11}-K_{12}K_{22}^{-1}K_{21})^{-1}(F_1-K_{12} K_{22}^{-1}F_2).
\end{align}

Equation (\ref{eq:spmor-secordIndex1:thm1}) implies
\begin{align*}
\bar{G}(s)  = \begin{bmatrix} H_1 & H_2 \end{bmatrix} \begin{bmatrix} \textbf{x}_1 \\ \textbf{x}_2 \end{bmatrix}+D_a=
  H_1\textbf{x}_1 + H_2\textbf{x}_2+D_a.
\end{align*}

Using $\textbf{x}_1$ and $\textbf{x}_2$, and some algebraic manipulations lead the above equation to the form 
\begin{align*}
\bar{G}(s)  = & (H_1 - H_2{K_{22}}^{-1}K_{21})(s^2 M_{11}+ sL_{11}+K_{11}-K_{12}K_{22}^{-1}K_{21})^{-1}\\ &\quad(F_1-K_{12} K_{22}^{-1}F_2)+ (D_a+H_2K_{22}^{-1}F_2).
\end{align*}

Now following (\ref{eq:spmor-secordIndex1:schurcomp}) we obtain
\begin{align*}
\bar{G}(s)  =  \mathcal{H}(s^2 \mathcal{M}+ s\mathcal{L}+\mathcal{K})^{-1}\mathcal{F}+ \mathcal{D}_a,
\end{align*}
which leads to the desired conclusion.
\end{proof}

We are now ready to discuss the interpolatory methods for second-order descriptor systems (\ref{eq:int:systemequation}). In the context of Theorem~\ref{eq:spmor-secordIndex1:thm}, dynamical systems (\ref{eq:int:systemequation}), (\ref{eq:spmor-secordIndex1:matrixvecform}), and (\ref{eq:spmor-secordIndex1:systemequation}) are equivalent. Therefore, instead of applying the proposed model reduction method onto the descriptor systems (\ref{eq:int:systemequation}), we can apply to the equivalent form (\ref{eq:spmor-secordIndex1:systemequation}).
 
\begin{theorem}
Let $G(s)= G_{1}(s)+G_2(s),$ where $G_{1}(s)$ and $G_2(s)$ 
are the strictly proper part and polynomial part, respectively, be the transfer function matrix 
of the original system and  $\hat{G}(s)= \hat{G}_{1}(s)+\hat{G}_2(s),$ where $\hat{G}_{1}(s)$ and $\hat{G}_2(s)$ are strictly proper part and polynomial part, respectively, be the transfer function matrix of its reduced system.   
If $\hat{G}(s)$ minimizes the overall error $ \| G-\hat{G}\|$, then $G_2(s)=\hat{G}_2(s)$ and $\hat{G}_{1}(s)$  minimizes the error $ \| G_1-\hat{G}_1\|$.
\end{theorem}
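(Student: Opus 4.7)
The plan is to split the error additively and exploit the fact that the strictly proper and polynomial parts live in complementary subspaces with respect to the system norm being minimized. Writing
\begin{equation*}
G(s) - \hat{G}(s) = \bigl(G_1(s) - \hat{G}_1(s)\bigr) + \bigl(G_2(s) - \hat{G}_2(s)\bigr),
\end{equation*}
the first summand is a strictly proper rational matrix (as a difference of two such objects) and thus decays to zero as $|s| \to \infty$, while the second summand is a polynomial matrix that either grows without bound on the imaginary axis (if $\deg(G_2-\hat{G}_2)\geq 1$) or tends to a fixed nonzero constant (if it is a nonzero constant). Consequently, no cancellation between the two terms is possible at infinity.

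Next I would argue that a minimizer $\hat{G}$ of $\|G-\hat{G}\|$ must satisfy $G_2=\hat{G}_2$. If $G_2\neq\hat{G}_2$, then the irreducible polynomial residual either forces $\|G-\hat{G}\|_{\mathcal{H}_2}=\infty$ (strictly proper functions are not dense in polynomials under the $\mathcal{H}_2$ norm) or unnecessarily inflates $\|G-\hat{G}\|_{\mathcal{H}_\infty}$ since $\hat{G}_1$ is strictly proper and cannot cancel the polynomial mismatch at infinity. Since the polynomial part $\hat{G}_2$ can be matched to $G_2$ independently (through the feedthrough $\hat{D}_a$, which the reduction procedure leaves free as $\hat{D}_a := \mathcal{D}_a$ by construction), any optimal $\hat{G}$ must achieve this matching.

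Once $G_2=\hat{G}_2$ is enforced, the error collapses to
\begin{equation*}
G(s) - \hat{G}(s) = G_1(s) - \hat{G}_1(s),
\end{equation*}
so $\|G-\hat{G}\| = \|G_1-\hat{G}_1\|$, and the outer minimization over $\hat{G}$ is equivalent to minimizing $\|G_1-\hat{G}_1\|$ over the strictly proper reduced part $\hat{G}_1$. This gives both conclusions of the theorem simultaneously.

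The main obstacle I expect is the rigorous justification that the decomposition $G=G_1+G_2$ into strictly proper and polynomial parts is unique and that matching $\hat{G}_2$ to $G_2$ does not constrain the choice of $\hat{G}_1$. Uniqueness follows from the fact that the only rational function that is both strictly proper and polynomial is zero, so the two subspaces intersect trivially. The independence follows from the explicit form derived in \eqref{eq:spmor-secordIndex1:schurcomp}: the polynomial part is carried by $\mathcal{D}_a$ and can be set directly, while $\hat{G}_1$ is controlled by the projection matrices $V_s,W_s$, which act only on the strictly proper dynamics.
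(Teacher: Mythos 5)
First, be aware that the paper itself contains no proof of this theorem: its proof environment is a single line deferring to \cite[Algorithm 4.1]{morGugSW13}. So your argument can only be compared with the standard argument in that reference, which it essentially reconstructs. For the $\mathcal{H}_2$ norm --- the norm that actually matters in the IRKA context --- your reasoning is correct and complete in outline: the splitting $G-\hat{G}=(G_1-\hat{G}_1)+(G_2-\hat{G}_2)$ is unique because only the zero function is both strictly proper and polynomial; the strictly proper residual vanishes as $|\omega|\to\infty$ on the imaginary axis while a nonzero polynomial residual does not, so the $\mathcal{H}_2$ integrand stays bounded away from zero for large $|\omega|$ and the error is infinite whenever $G_2\neq\hat{G}_2$; since a finite-error candidate exists (take $\hat{\mathcal{D}}_a:=\mathcal{D}_a$ and any stable strictly proper $\hat{G}_1$), a minimizer must match the polynomial parts, after which $\|G-\hat{G}\|=\|G_1-\hat{G}_1\|$. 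Your closing remark that $\hat{G}_2$ is carried entirely by the feedthrough and is decoupled from $V_s$, $W_s$ correctly handles the independence of the two minimizations in this setting.

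The genuine gap is in your $\mathcal{H}_\infty$ branch. A nonconstant polynomial mismatch does make the $\mathcal{H}_\infty$ error infinite, but a nonzero \emph{constant} mismatch leaves it finite, and your claim that it then ``unnecessarily inflates'' the norm is false: for the stable strictly proper residual $g(s)=2/(s+1)$, the image of the imaginary axis is the circle $|z-1|=1$, so $\|g\|_{\mathcal{H}_\infty}=2$ whereas $\|g-1\|_{\mathcal{H}_\infty}=1$; a constant offset can strictly decrease the supremum precisely because $\hat{G}_1$, being strictly proper, cannot reproduce that offset itself. This is not a technicality: in optimal Hankel-norm/$\mathcal{H}_\infty$ approximation the optimal feedthrough of the reduced model generically differs from that of the original, so the conclusion $G_2=\hat{G}_2$ fails for the $\mathcal{H}_\infty$ norm as the theorem is loosely stated. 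Your proof stands only if $\|\cdot\|$ is read as the $\mathcal{H}_2$ norm (or any norm for which a nonzero constant at infinity forces an infinite error); you should say so explicitly rather than offering the $\mathcal{H}_\infty$ alternative as if it were covered by the same argument.
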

 
\begin{proof}
 For a proof see, e.g., \cite[Algorithm 4.1]{morGugSW13}.
\end{proof}
As a consequence of this theorem,  to apply interpolatory tangential methods via IRKA onto (\ref{eq:spmor-secordIndex1:systemequation}), the interpolation points and tangential directions are computed based on the  strictly proper part of the transfer-function matrix. One has to make sure that the reduced model has the same polynomial part as the original one. Therefore, we will modify the standard IRKA  discussed in Section \ref{sec:standard} as follows to meet  these changes.

Select a set of  interpolation points 
$\left\{\alpha_i\right\}_{i=1}^{r}$, right tangential directions
$\left\{b_i\right\}_{i=1}^{r}$ and left tangential directions 
$\left\{c_i\right\}_{i=1}^{r}$ and construct $V_s$ and $W_s$ as follows
\begin{equation}\label{eq:dae:solution2}
\begin{aligned}
V_s & = \left[(\alpha_1^2 \mathcal{M}+\alpha_1\mathcal{L}+\mathcal{K})^{-1}\mathcal{F}b_1, \cdots,(\alpha_r^2\mathcal{M}+\alpha_r\mathcal{L}+\mathcal{K})^{-1}\mathcal{F}b_r\right],\\
W_s & = \left[(\alpha_1^2 \mathcal{M}+\alpha_1\mathcal{L}+\mathcal{K})^{-T}\mathcal{H}^Tc_1, \cdots,(\alpha_r^2\mathcal{M}+\alpha_r\mathcal{L}+\mathcal{K})^{-T}\mathcal{H}^Tc_r\right].
\end{aligned}
\end{equation}

Applying $V_s$ and $W_s$ onto the system (\ref{eq:spmor-secordIndex1:systemequation}) the following reduced-order model is constructed
\begin{align}\label{eq:dae:spmor}
    \hat{\mathcal{M}}\ddot{\hat{v}}(t)+ \hat{\mathcal{L}} \dot{\hat{v}}(t)+\hat{\mathcal{K}}\hat{v}(t)  = \hat{\mathcal{F}} u(t),\quad \text{and}\quad
     \hat{y}(t) = \hat{\mathcal{H}} \hat{v}(t)+\hat{\mathcal{D}}_au(t),
\end{align} 
where the reduced matrices are formed as follows
\begin{equation}\label{eq:dae:red-matrics}
\begin{aligned}
 \hat{\mathcal{M}}=W_s^T \mathcal{M} V_s, \ \hat{\mathcal{L}}= W_s^T \mathcal{L} V_s, \\
\hat{\mathcal{K}}= W_s^T \mathcal{K} V_s, \ \hat{\mathcal{F}}=W_s^T\mathcal{F}, \ \hat{\mathcal{H}}=\mathcal{H}V_s\ \text{and}\ \hat{\mathcal{D}}_a:=\mathcal{D}_a. 
\end{aligned}
\end{equation}

These reduced matrices can also be formed using the block matrices from the descriptor system (\ref{eq:int:systemequation}) as
\begin{equation} \label{eq:dae:red-matrices}
\begin{aligned}
& \hat{\mathcal{M}} := W_s^T M_{11}V_s, \,\, \hat{\mathcal{L}} := W_s^TL_{11}V_s,\,\, 
\hat{\mathcal{K}} :=  \hat{K}_{11} - \hat{K}_{12}{K_{22}^{-1}}\hat{K}_{21}, \\  
& \hat{\mathcal{F}} := \hat{F}_1 - \hat{K}_{12}{K_{22}^{-1}}F_2, \,\,
\hat{\mathcal{H}} := \hat{H}_1 - H_2{K_{22}^{-1}}\hat{K}_{21},\,
\hat{\mathcal{D}}_a := D_a + H_2{K_{22}^{-1}}F_2,
\end{aligned}
\end{equation}
where
$\hat{K}_{11} = W_s^T K_{11}V_s,~\hat{K}_{12} = W_s^T K_{12},~\hat{K}_{21} = K_{21}V_s,~\hat{F}_1 = W_s^T F_1,~ \hat{H}_1 = H_1V_s$, 
which however show  that the reduced-matrices can be constructed  without forming the dense system (\ref{eq:spmor-secordIndex1:systemequation}). 

Now the important issue is that how to construct the transformation matrices $V_s$ and $W_s$ from the sparse system. To construct $V_s$ in (\ref{eq:dae:solution2}) at $i-$th iteration the vector $v_i= (\alpha_i^2 \mathcal{M}+\alpha_i\mathcal{L}+\mathcal{K})^{-1}\mathcal{F}b_1$ is obtained by solving the linear system 
\begin{align}\label{eq:daes:danselinearsystm}
(\alpha_i^2 \mathcal{M}+\alpha_i\mathcal{L}+\mathcal{K})v_i= \mathcal{F}b_i.
\end{align}

Plugging $\mathcal{M}$, $\mathcal{L}$, $\mathcal{K}$ and $\mathcal{F}$ from  (\ref{eq:spmor-secordIndex1:schurcomp}) we obtain
$$
(\alpha_i^2 M_{11}+\alpha_iL_{11}+K_{11}-K_{12}{K_{22}}^{-1}K_{21})v_i= (F_1 - K_{12}{K_{22}}^{-1}F_2)b_i,
$$
which implies to 
\begin{align}\label{eq:daes:effcls1}
\begin{bmatrix}\alpha_i^2  M_{11} + \alpha_i L_{11} + K_{11} & K_{12} \\ K_{21} & K_{22} \end{bmatrix}  \begin{bmatrix} v_i \\ \Gamma \end{bmatrix}  =   \begin{bmatrix} F_1 \\ F_2 \end{bmatrix} b_i,
\end{align}
for $v_i$, where $\Gamma$ is the truncated term. Although the dimension of this linear system  is higher than that of (\ref{eq:daes:danselinearsystm}), 
it is sparse and therefore, it can be treated using a sparse direct solver \cite{davis2006direct,davis2016survey}, or any suitable iterative solver \cite{Saa03a,ahamed2012iterative}
efficiently. Similarly, each vector $w_i= (\alpha_i^2 \mathcal{M}+\alpha_i\mathcal{L}+\mathcal{K})^{-T}\mathcal{H}^T c_i$ in $W_s$ of (\ref{eq:dae:solution2}) can be formed by solving the sparse linear system. Which again implies to 
\begin{align}\label{eq:daes:effcls2}
\begin{bmatrix}\alpha_i^2  {M_{11}}^T+ \alpha_i {L_{11}}^T+ K_{11}^T & K_{21}^T \\ K_{12}^T & K_{22}^T \end{bmatrix}  \begin{bmatrix} w_i \\ \Gamma \end{bmatrix}  =   \begin{bmatrix} H_1^T \\ H_2^T \end{bmatrix} c_i.
\end{align}

In this way $V_s$ and $W_s$ can be constructed without forming the dense system (\ref{eq:spmor-secordIndex1:schurcomp}) explicitly.

\subsection{Update interpolation points and tangential directions} \label{sec:updated}
We have mentioned earlier that in the tangential interpolatory methods, the selection of interpolation points, and tangential directions is an important task. Since they depend on the reduced-order model, they are not known \emph{a priori}. From Section~\ref{sec:standard} we have already known that IRKA has overcome this problem. Here we also follow Step-5 in Algorithm~\ref{alg:irka1} to select $r$ interpolation points along with left and right tangential directions. 
In our case, we construct 
\begin{align}\label{eq:daes:1storderreducedmodel}
  \hat{\mathcal{E}}:= \begin{bmatrix}
   0 & \hat{\mathcal{M}}\\
   \hat{\mathcal{M}} & \hat{\mathcal{L}}
 \end{bmatrix}, \quad \hat{\mathcal{A}}:= \begin{bmatrix}
      \hat{\mathcal{M}} &  0\\
       0 &  \hat{\mathcal{-K}}
    \end{bmatrix}, \quad \hat{\mathcal{B}}:= \begin{bmatrix}
        0\\ \hat{\mathcal{F}} 
    \end{bmatrix}\quad \text{and}\quad
    \hat{\mathcal{C}}:= \begin{bmatrix}
        0\ \hat{\mathcal{H}} 
    \end{bmatrix}.
\end{align}

Then apply Algorithm~\ref{alg:irka1} using the inputs: $\hat{\mathcal{E}}, \hat{\mathcal{A}}, \hat{\mathcal{B}}
\quad \text{and}\quad \hat{\mathcal{C}}$ to find $r\times r$ matrices $\hat{A}$ and $\hat{E}$.
The interpolation points are updated by choosing  the mirror images of the eigenvalues of the pair $(\hat{A},\hat{E})$ 
as the next interpolation points. The tangential directions are also updated similarly to Algorithm~\ref{alg:irka1}.  

The whole procedure discussed above to construct a structure-preserving reduced-order model for the second-order index-1 DAEs (\ref{eq:int:systemequation}) that summarized in Algorithm~\ref{alg:daes:irka2}.
\begin{algorithm}[t]
	\SetAlgoLined
	\SetKwInOut{Input}{Input}
	\SetKwInOut{Output}{Output}
	\caption{IRKA for Second-Order Index-1 Descriptor Systems.}
	\label{alg:daes:irka2}
	\Input {$M_{11}, L_{11}, K_{11}, K_{12}, K_{21}, K_{22}, F_1, F_2, H_1, H_2$ and $D_a$.}
	\Output  {$\hat{\mathcal{M}}, \hat{\mathcal{L}}, \hat{\mathcal{K}}, \hat{\mathcal{F}}, \hat{\mathcal{H}}$ and $\hat{\mathcal{D}}_a: = D_a + H_2{K_{22}}^{-1}F_2$ }
	Make the initial selection of the interpolation points $\{\alpha_i\}_{i=1}^r$ and the tangential directions $\{b_i\}_{i=1}^r$ and $\{c_i\}_{i=1}^r$.\\
	Construct 
	$$V_s = \left[v_1, v_2, \cdots, v_r\right] \quad \text{and}
	\quad W_s = \left[w_1, w_2, \cdots, w_r\right],$$ 
	where $v_i$ and $w_i$; $i=1,\cdots,r$ are the solutions of the linear systems (\ref{eq:daes:effcls1})
	and (\ref{eq:daes:effcls2}), respectively. \\
	\While{(not converged)}{%
	Compute $\hat{\mathcal{M}}, \ \hat{\mathcal{L}}, \ \hat{\mathcal{K}}, \ \hat{\mathcal{F}}$ and $\hat{\mathcal{H}}$ by (\ref{eq:dae:red-matrices}).\\
	Construct $\hat{\mathcal{E}}$, $\hat{\mathcal{A}}$, $\hat{\mathcal{B}}$ and $\hat{\mathcal{C}}$ as in (\ref{eq:daes:1storderreducedmodel}), then using as inputs in Algorithm~\ref{alg:irka1} and compute $\hat{A}, \hat{E}\in \mathbb{R}^{r \times r}$. \\ 
	Compute $\hat{A}z_i = \lambda_i\hat{E}z_i$ and $y^*_i \hat{A} = \lambda_i y^*_i \hat{E}$ for $\alpha_i \leftarrow -\lambda_i$, $b^*_i \leftarrow -y^*_i \hat{B}$ and $c^*_i \leftarrow \hat{C}z^*_i$, for $i=1,\cdots ,r$. \\
	Repeat Step~2. \\	
}
	Construct the reduced-order matrices
	$\hat{\mathcal{M}}, \ \hat{\mathcal{L}}, \ \hat{\mathcal{K}}, \ \hat{\mathcal{F}}$ and $\hat{\mathcal{H}}$ as in (\ref{eq:dae:red-matrices}).
\end{algorithm}

As the interpolatoty projection-based technique IRKA does not depend on the stability of the target system, the Algorithm~\ref{alg:daes:irka2} is stable and can be applied for unstable systems as well.

\subsection{Back to index-1 form} \label{backtoindex1}
 Algorithm~\ref{alg:daes:irka2} yields a standard reduced-order model (\ref{eq:dae:spmor}) from the second-order index-1 DAEs (\ref{eq:int:systemequation}). A little algebraic manipulation again turns back (\ref{eq:dae:spmor}) into an index-1 form 
\begin{subequations}
\label{eq:spmor-secordIndex1:reducedindex1form}
\begin{align}
\begin{bmatrix} \hat{\mathcal{M}} & 0 \\0 & 0 \end{bmatrix} \begin{bmatrix} \ddot{\hat{v}} (t) \\ \ddot \eta (t) \end{bmatrix} + 
\begin{bmatrix} \hat{\mathcal{L}} & 0 \\ 0 & 0 \end{bmatrix} \begin{bmatrix}\dot{\hat{v}} (t) \\ \dot \eta (t) \end{bmatrix} + 
\begin{bmatrix} \hat{K}_{11} & \hat{K}_{12} \\ \hat{K}_{21} & K_{22} \end{bmatrix} \begin{bmatrix} v (t) \\ \eta (t) \end{bmatrix}  = 
\begin{bmatrix} \hat{F}_1 \\ F_2 \end{bmatrix} u(t),\label{eq:spmor-secordIndex1:reducedindex1form1}\\
y(t) = \begin{bmatrix} \hat{H}_1 & H_2 \end{bmatrix} \begin{bmatrix} v (t) \\ \eta (t) \end{bmatrix}+D_au(t),\label{eq:spmor-secordIndex1:reducedindex1form2}
\end{align}
\end{subequations}
where all the block matrices have been defined in (\ref{eq:dae:red-matrices}). Note that this turnover, however, is not too much beneficiary if the algebraic part of the system is still large. 

\subsection{Setting with symmetric system} \label{symmetric}
When the system (\ref{eq:int:systemequation}), as defined in Section~\ref{eq:int:systemequation}, is symmetric, then the computing $V_s$ and $W_s$ in Algorithm~\ref{alg:daes:irka2} have coincided. Therefore, we can compute only Vs, and the reduced-order model in (\ref{eq:dae:spmor}), can be constructed by forming the reduced matrices in (\ref{eq:dae:red-matrices}) by using $W_s=V_s$. In this way, the constructed reduced-order model be symmetric also.  Moreover, the ROM preserves the definiteness of the original system, and the stability remains conserved.

\newlength\figwidth
\setlength{\figwidth}{.33\linewidth}
\newlength\figheight
\setlength{\figheight}{.2\linewidth}
\tikzset{mark options={solid,mark size=3,line width=5pt,mark repeat=10},line width=5pt}
\pgfplotsset{every axis plot/.append style={line width=1.5pt}}

\section{Numerical results}
In this section, we illustrate numerical results to assess the accuracy and efficiency of our proposed techniques. The techniques have applied to a set of data for the finite element discretization of Adaptive Spindle Support(ASS), \cite{kranz2009} that has already been described in Section~\ref{sec:introduction}.  In experimental data, the block matrices $M_{11}$, $L_{11}$, $K_{11}$, and $K_{22}$ are symmetric, $K_{21}=K_{12}^T$, and also the output matrix $H$ is equal to the transpose of the input matrix $F$. Therefore, the system is symmetric and hence can  exploit the symmetric properties as discussed in subsection~\ref{symmetric}. The dimension of the original model is $n=290\,137$, which consists of $n_1= 282\,699$ differential equations and $n_2=7\,438$ algebraic equations. Moreover, the number of inputs and outputs of the system is 9.

In this paper, all the results have been obtained  using MATLAB 9.5.0 (R2018b) on a Linux operating system having 24$\times$ AMD Ryzen Threadripper 1920X 12-core processor with  2.07-GHz clock speed, 128-GB of total RAM. 

\subsection{Frequency domain analysis}
We have computed the ROMs of different dimensions for the ASS model by applying  Algorithm~\ref{alg:daes:irka2}. Since the properties of the ROMs remain identical for a large number of iterations after $20$ iterations, we have continued for $20$ iterations (or cycles) at most with the tolerance  $10^{-3}$. The frequency-domain comparisons of the full model and different dimensional ROMs are demonstrating in Figure~\ref{fig:roms} on the range [$10^1 - 10^4$] [rad/s]. At each iteration of Algorithm~\ref{alg:daes:irka2}, to update the interpolation points and tangential directions, we have used Algorithm~\ref{alg:irka1} with the tolerance $10^{-5}$ and a maximum number of $20$ iterations.

Figure~\ref{fig:sigmaplot} shows the frequency responses of different dimensional reduced-order models with the full model. They are with good matching. In Figure~\ref{fig:relatverr}, the relative error between the frequency responses of the full model and reduced models have shown with upstanding accuracy.  From this figures, it can observe that the error is getting  higher if the dimension of the reduced-order model is gradually decreasing. But all the ROMs preserve the fundamental and vital attributes of the full model. Therefore, the achieved ROMs can implemented instead of the original model to perform the necessary operations of the real controller. 

\begin{figure}[!htbp]
\setlength{\figwidth}{.75\linewidth}
\setlength{\figheight}{.6\linewidth}
\begin{subfigure}{\linewidth}
\centering
\input{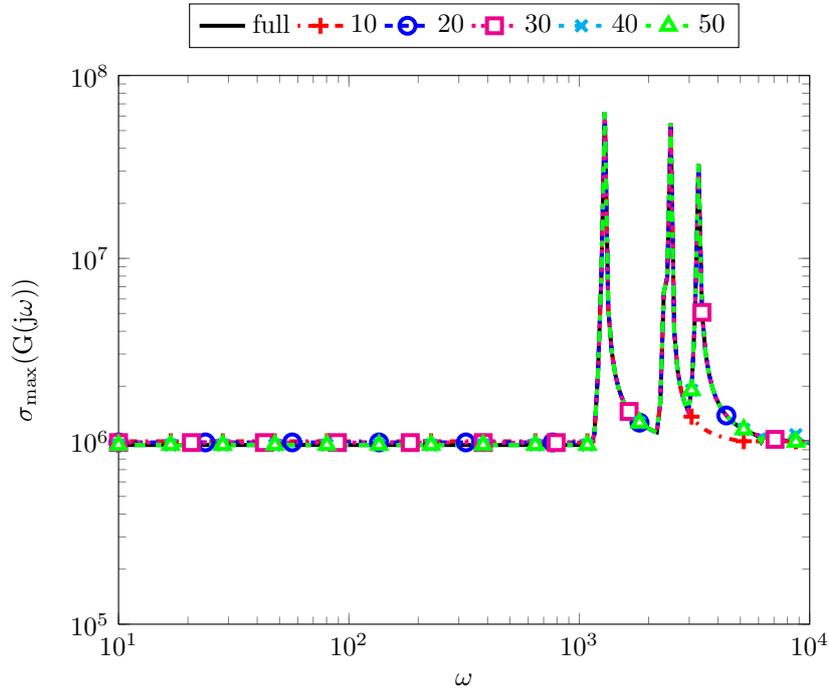}
\caption{Sigma plot}
\label{fig:sigmaplot}
\end{subfigure}
\begin{subfigure}{\linewidth}
\centering
\input{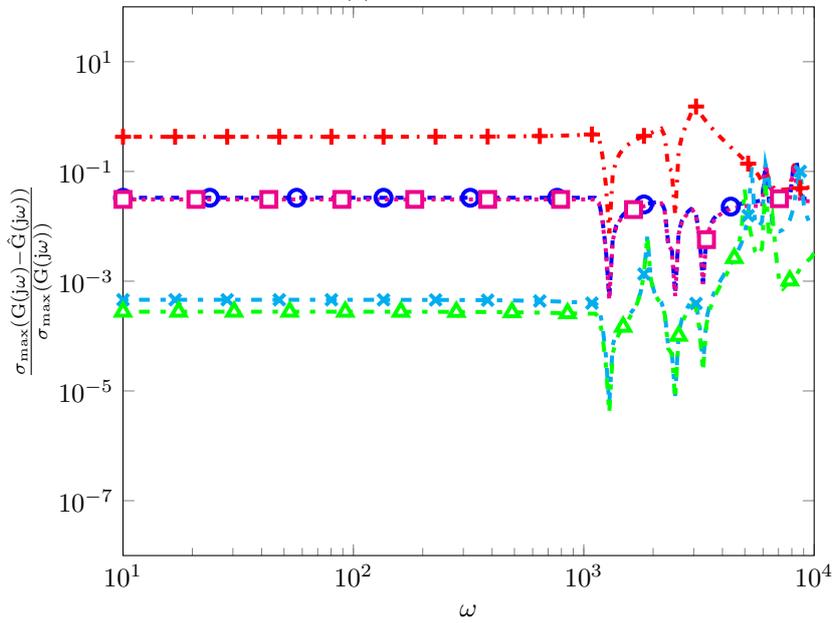}
\caption{Relative error}
\label{fig:relatverr}
\end{subfigure}
\caption{Comparisons of full and different dimensional ROMs (dimensions indicated in the legend) computed by Algorithm~\ref{alg:daes:irka2}.}
\label{fig:roms}
\end{figure}
\begin{figure}[tb]
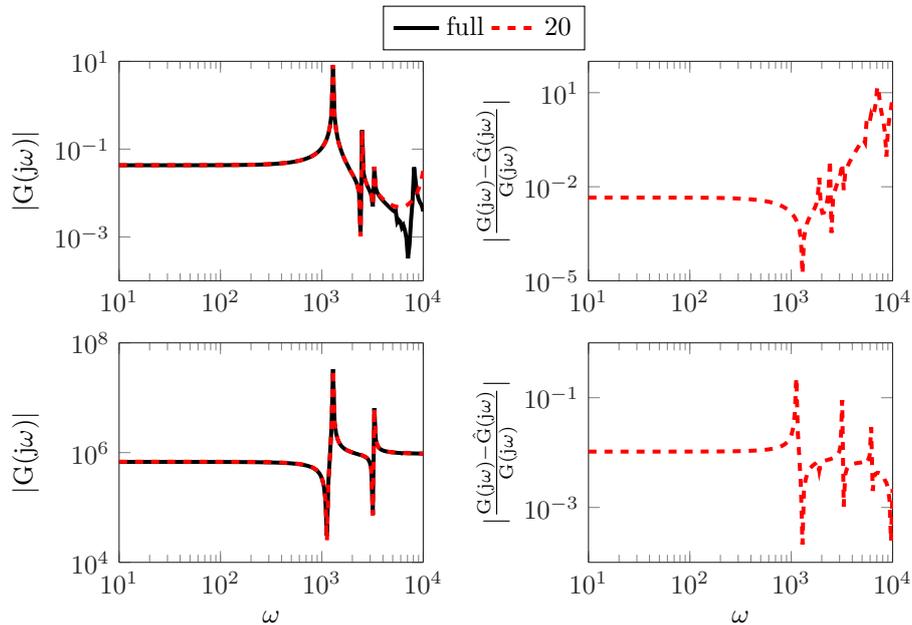

 \centering
 \begin{subfigure}[t]{.49\linewidth}
 \input{figures/tf1to9}
 \end{subfigure}
 \begin{subfigure}[t]{.49\linewidth}
%
%
%
%
\begin{tikzpicture}

\begin{loglogaxis}[%
width=\figwidth,
height=1.20\figheight,
scale only axis,
separate axis lines,
every outer x axis line/.append style={darkgray!60!black},
every x tick label/.append style={font=\color{darkgray!60!black}},
xmin=10,
xmax=10000,
every outer y axis line/.append style={darkgray!60!black},
every y tick label/.append style={font=\color{darkgray!60!black}},
ylabel=$|\frac{\text{G(j}\omega\text{)}-\hat{\text{G}}\text{(j}\omega\text{)}}{\text{G(j}\omega\text{)}}|$,
ymin=1e-05,
ymax=100
]
\addplot [
color=red,
dashed,
]
table[row sep=crcr]{
10 0.0045282622825782\\
10.3532184329566 0.0045282366427292\\
10.7189131920513 0.00452820914023913\\
11.0975249641207 0.00452817965930679\\
11.4895100018731 0.00452814808216594\\
11.8953406737032 0.00452811420649602\\
12.3155060329283 0.00452807792526348\\
12.7505124071301 0.00452803902524439\\
13.2008840083142 0.00452799731228598\\
13.6671635646201 0.00452795260973525\\
14.1499129743458 0.00452790471052101\\
14.6497139830729 0.00452785334284555\\
15.1671688847092 0.00452779831396367\\
15.7029012472938 0.00452773931152992\\
16.2575566644379 0.00452767604371587\\
16.8318035333096 0.00452760825277727\\
17.4263338600965 0.00452753559048461\\
18.0418640939207 0.00452745769574865\\
18.6791359902078 0.0045273741998971\\
19.3389175045523 0.0045272847139674\\
20.0220037181558 0.00452718879119741\\
20.7292177959537 0.00452708595971196\\
21.461411978584 0.00452697576868431\\
22.2194686093952 0.0045268576136587\\
23.0043011977292 0.00452673100318012\\
23.8168555197616 0.00452659529585221\\
24.658110758226 0.00452644979965756\\
25.5290806823952 0.00452629385458569\\
26.4308148697411 0.00452612669285537\\
27.3643999707467 0.00452594753028147\\
28.3309610183932 0.0045257555022271\\
29.3316627839005 0.00452554964220745\\
30.3677111803546 0.00452532901029311\\
31.440354715915 0.00452509253787165\\
32.5508859983506 0.00452483903658918\\
33.7006432927193 0.00452456731817233\\
34.8910121340677 0.004524276092896\\
36.1234269970943 0.00452396392842121\\
37.399373024788 0.00452362932150859\\
38.7203878181256 0.00452327067562549\\
40.0880632889846 0.00452288624587451\\
41.5040475785048 0.0045224742009885\\
42.9700470432084 0.00452203257470656\\
44.4878283112759 0.0045215592060341\\
46.0592204114511 0.00452105180844633\\
47.6861169771447 0.00452050795697567\\
49.37047852839 0.0045199250548784\\
51.1143348344017 0.00451930025267752\\
52.9197873595844 0.00451863059424426\\
54.7890117959394 0.00451791280212057\\
56.7242606849198 0.00451714348085296\\
58.7278661318948 0.00451631888540221\\
60.8022426164942 0.00451543507570453\\
62.9498899022189 0.00451448781985808\\
65.1733960488242 0.00451347250516774\\
67.4754405311069 0.00451238433018633\\
69.8587974678525 0.00451121801225245\\
72.3263389648353 0.00450996797847382\\
74.8810385759002 0.0045086282216158\\
77.5259748862946 0.00450719230288915\\
80.2643352225717 0.00450565332325118\\
83.099419493534 0.00450400393822986\\
86.0346441668451 0.00450223622852447\\
89.0735463861044 0.00450034170629586\\
92.2197882333433 0.00449831131137815\\
95.4771611420806 0.00449613532255273\\
98.8495904662559 0.00449380331836892\\
102.341140210545 0.00449130417582877\\
105.956017927762 0.00448862591069631\\
109.698579789238 0.00448575575326992\\
113.573335834311 0.00448268000477823\\
117.584955405216 0.004479383992226\\
121.738272773966 0.00447585202560594\\
126.038292967973 0.00447206728973353\\
130.49019780144 0.00446801173723625\\
135.099352119803 0.00446366610059905\\
139.871310264724 0.0044590098222527\\
144.811822767453 0.00445402073577031\\
149.926843278605 0.00444867523777854\\
155.222535742705 0.00444294808567672\\
160.705281826164 0.00443681216661009\\
166.381688607613 0.00443023855547391\\
172.258596539879 0.00442319633122527\\
178.343087693191 0.00441565236382056\\
184.642494289554 0.00440757128523378\\
191.16440753857 0.00439891519407168\\
197.916686785356 0.00438964372800353\\
204.907468981585 0.00437971362598606\\
212.145178491063 0.00436907869058247\\
219.638537241655 0.00435768968206347\\
227.396575235793 0.00434549388275099\\
235.428641432242 0.00433243509358116\\
243.744415012222 0.00431845330679383\\
252.353917043477 0.00430348455695829\\
261.267522556333 0.00428746056217672\\
270.495973046314 0.0042703086053018\\
280.050389418363 0.00425195116717836\\
289.942285388288 0.00423230567263284\\
300.183581357559 0.00421128426810901\\
310.786618778201 0.00418879342405087\\
321.764175025074 0.00416473372948928\\
333.129478793467 0.00413899956880171\\
344.896226040576 0.00411147879496621\\
357.078596490046 0.00408205239324598\\
369.691270719502 0.00405059419257856\\
382.749447851632 0.00401697059546681\\
396.268863870148 0.00398104024746228\\
410.265810582719 0.00394265375970261\\
424.75715525369 0.00390165341878878\\
439.760360930272 0.00385787303291824\\
455.293507486695 0.00381113763234588\\
471.375313411672 0.00376126336773192\\
488.025158365443 0.00370805750817203\\
505.263106533568 0.00365131823679411\\
523.109930805626 0.00359083486304129\\
541.587137807948 0.00352638803977926\\
560.716993820546 0.0034577499360986\\
580.52255160949 0.00338468488095103\\
601.027678207038 0.00330694994433911\\
622.257083673023 0.00322429587249522\\
644.236350872137 0.00313646822975525\\
666.991966303012 0.0030432089894396\\
690.551352016233 0.00294425833834067\\
714.942898659758 0.00283935698576611\\
740.195999691564 0.00272824902807906\\
766.341086800746 0.00261068530521875\\
793.409666579749 0.00248642748053112\\
821.434358491942 0.00235525293571562\\
850.448934180268 0.00221696047849178\\
880.488358164346 0.00207137696644379\\
911.588829975082 0.00191836520342357\\
943.787827777538 0.00175783288169171\\
977.12415353465 0.00158974268824025\\
1011.63797976621 0.00141412397555593\\
1047.37089795945 0.00123108551944012\\
1084.36596868961 0.0010408297513208\\
1122.66777351081 0.000843669001201987\\
1162.32246867985 0.000640048234389517\\
1203.37784077759 0.000430604817022865\\
1245.88336429501 0.000216733081605919\\
1289.89026125331 1.62620558906818e-05\\
1335.4515629299 0.000229248765773876\\
1382.62217376466 0.00045524327396867\\
1431.45893752348 0.000684239978982635\\
1482.02070579886 0.000916619316029056\\
1534.36840893001 0.00115510551222617\\
1588.56512942805 0.00140759456415523\\
1644.67617799466 0.00169493180127941\\
1702.7691722259 0.00207558310626778\\
1762.91411809595 0.00275150151161142\\
1825.18349431904 0.00492754180690727\\
1889.65233969121 0.0192540661137761\\
1956.39834351706 0.002481082952383\\
2025.50193923067 0.0014640921903613\\
2097.04640132323 0.00149687010832924\\
2171.1179456945 0.00204510293514259\\
2247.80583354873 0.00319453834615374\\
2327.20247896041 0.00585563775445266\\
2409.40356023952 0.0729533319305611\\
2494.50813523032 0.000339767617350466\\
2582.61876068267 0.00390081330479031\\
2673.84161583995 0.00677904354853994\\
2768.28663039207 0.00991995859909727\\
2866.06761694825 0.0137589469240885\\
2967.30240818887 0.018862484367478\\
3072.11299886176 0.0269425149529154\\
3180.62569279412 0.0556693935343381\\
3292.97125509715 0.00791667879541413\\
3409.28506974681 0.0276900309009565\\
3529.70730273065 0.0392520950364896\\
3654.38307095726 0.0507531065868624\\
3783.46261713193 0.0635928807910984\\
3917.10149080926 0.0784458163443771\\
4055.46073584083 0.095860549552668\\
4198.70708444391 0.116393494244396\\
4347.01315812503 0.140647184336815\\
4500.5576757005 0.169259136593935\\
4659.52566866468 0.202801956264273\\
4824.10870416537 0.241410573560481\\
4994.50511585514 0.283350418335565\\
5170.92024289676 0.317928570862546\\
5353.56667741072 0.257804509824013\\
5542.66452066311 1.2585172200304\\
5738.44164830239 0.9755299985333\\
5941.13398496503 1.17999960994392\\
6150.9857885805 2.09220060503712\\
6368.24994471859 1.65388248512702\\
6593.18827133355 2.54652458326194\\
6826.07183427239 4.5390392413085\\
7067.18127392749 15.9197391425376\\
7316.8071434272 9.9985514547299\\
7575.25025877191 3.68590888133805\\
7842.82206133768 2.04240990358329\\
8119.84499318401 1.18206279119436\\
8406.65288561832 0.542432519547715\\
8703.59136148517 0.0926126932148971\\
9011.01825166502 0.722577594417797\\
9329.30402628469 1.62089883234746\\
9658.8322411587 3.07752414207599\\
10000 7.4666302583655\\
};
\end{loglogaxis}
\end{tikzpicture}%
 \end{subfigure}
 \begin{subfigure}[t]{.49\linewidth}
 \input{figures/tf9to9}
 \end{subfigure}
 \begin{subfigure}[t]{.49\linewidth}
%
%
%
%
\begin{tikzpicture}

\begin{loglogaxis}[%
width=\figwidth,
height=1.20\figheight,
scale only axis,
separate axis lines,
every outer x axis line/.append style={darkgray!60!black},
every x tick label/.append style={font=\color{darkgray!60!black}},
xmin=10,
xmax=10000,
xlabel={$\omega$},
every outer y axis line/.append style={darkgray!60!black},
every y tick label/.append style={font=\color{darkgray!60!black}},
ylabel=$|\frac{\text{G(j}\omega\text{)}-\hat{\text{G}}\text{(j}\omega\text{)}}{\text{G(j}\omega\text{)}}|$,
ymin=0.0001,
ymax=1
]
\addplot [
color=red,
dashed,
]
table[row sep=crcr]{
10 0.0104229408844612\\
10.3532184329566 0.0104229557210426\\
10.7189131920513 0.0104229716266614\\
11.0975249641207 0.0104229886761118\\
11.4895100018731 0.0104230069487449\\
11.8953406737032 0.0104230265372778\\
12.3155060329283 0.0104230475344855\\
12.7505124071301 0.0104230700443674\\
13.2008840083142 0.0104230941677972\\
13.6671635646201 0.0104231200284193\\
14.1499129743458 0.0104231477444076\\
14.6497139830729 0.0104231774616162\\
15.1671688847092 0.0104232093072012\\
15.7029012472938 0.0104232434510738\\
16.2575566644379 0.0104232800479583\\
16.8318035333096 0.0104233192766433\\
17.4263338600965 0.0104233613267737\\
18.0418640939207 0.0104234063979995\\
18.6791359902078 0.0104234547179932\\
19.3389175045523 0.0104235065124643\\
20.0220037181558 0.0104235620250229\\
20.7292177959537 0.0104236215375952\\
21.461411978584 0.0104236853307835\\
22.2194686093952 0.0104237537112124\\
23.0043011977292 0.01042382701448\\
23.8168555197616 0.0104239055909825\\
24.658110758226 0.0104239898248123\\
25.5290806823952 0.0104240801154715\\
26.4308148697411 0.010424176909426\\
27.3643999707467 0.0104242806635531\\
28.3309610183932 0.0104243918908959\\
29.3316627839005 0.0104245111256304\\
30.3677111803546 0.0104246389419258\\
31.440354715915 0.0104247759623639\\
32.5508859983506 0.0104249228467438\\
33.7006432927193 0.0104250803108467\\
34.8910121340677 0.0104252491166629\\
36.1234269970943 0.0104254300788933\\
37.399373024788 0.0104256240830012\\
38.7203878181256 0.0104258320686585\\
40.0880632889846 0.010426055036564\\
41.5040475785048 0.0104262940799719\\
42.9700470432084 0.0104265503565653\\
44.4878283112759 0.0104268251137988\\
46.0592204114511 0.0104271196907128\\
47.6861169771447 0.0104274355189672\\
49.37047852839 0.0104277741290523\\
51.1143348344017 0.0104281371901248\\
52.9197873595844 0.0104285264577508\\
54.7890117959394 0.0104289438444329\\
56.7242606849198 0.0104293913778789\\
58.7278661318948 0.0104298712593521\\
60.8022426164942 0.0104303858391414\\
62.9498899022189 0.0104309376354264\\
65.1733960488242 0.0104315293628566\\
67.4754405311069 0.0104321639243141\\
69.8587974678525 0.0104328444512903\\
72.3263389648353 0.0104335742923065\\
74.8810385759002 0.0104343570543685\\
77.5259748862946 0.0104351966156921\\
80.2643352225717 0.0104360971277457\\
83.099419493534 0.0104370630694964\\
86.0346441668451 0.010438099244096\\
89.0735463861044 0.0104392108281386\\
92.2197882333433 0.010440403363625\\
95.4771611420806 0.010441682843688\\
98.8495904662559 0.0104430556917688\\
102.341140210545 0.010444528835748\\
105.956017927762 0.0104461097207817\\
109.698579789238 0.0104478063722337\\
113.573335834311 0.0104496274389747\\
117.584955405216 0.0104515822256937\\
121.738272773966 0.0104536807717818\\
126.038292967973 0.0104559338997857\\
130.49019780144 0.0104583532993676\\
135.099352119803 0.0104609515572816\\
139.871310264724 0.010463742296622\\
144.811822767453 0.0104667402241701\\
149.926843278605 0.0104699612308894\\
155.222535742705 0.0104734225107888\\
160.705281826164 0.0104771426828879\\
166.381688607613 0.0104811419069503\\
172.258596539879 0.0104854420144196\\
178.343087693191 0.0104900667205257\\
184.642494289554 0.0104950417583623\\
191.16440753857 0.0105003950838682\\
197.916686785356 0.0105061571041642\\
204.907468981585 0.0105123609383582\\
212.145178491063 0.0105190426737282\\
219.638537241655 0.0105262416939746\\
227.396575235793 0.0105340010314489\\
235.428641432242 0.0105423677683127\\
243.744415012222 0.0105513934874384\\
252.353917043477 0.010561134791399\\
261.267522556333 0.0105716538952202\\
270.495973046314 0.0105830192912553\\
280.050389418363 0.0105953065328887\\
289.942285388288 0.0106085991320729\\
300.183581357559 0.0106229895799478\\
310.786618778201 0.0106385805559177\\
321.764175025074 0.0106554863175951\\
333.129478793467 0.0106738343555479\\
344.896226040576 0.0106937672742637\\
357.078596490046 0.0107154451441597\\
369.691270719502 0.0107390481153206\\
382.749447851632 0.0107647797044958\\
396.268863870148 0.0107928706215311\\
410.265810582719 0.0108235834604873\\
424.75715525369 0.010857218367085\\
439.760360930272 0.0108941199480995\\
455.293507486695 0.0109346858495949\\
471.375313411672 0.0109793773132798\\
488.025158365443 0.0110287324408772\\
505.263106533568 0.0110833829260984\\
523.109930805626 0.0111440753357833\\
541.587137807948 0.0112116985564809\\
560.716993820546 0.011287319497337\\
580.52255160949 0.0113722300607675\\
601.027678207038 0.011468010062712\\
622.257083673023 0.0115766122985664\\
644.236350872137 0.0117004798896039\\
666.991966303012 0.0118427109240763\\
690.551352016233 0.0120072942279191\\
714.942898659758 0.0121994551228246\\
740.195999691564 0.0124261757709265\\
766.341086800746 0.0126970027179879\\
793.409666579749 0.0130253450778796\\
821.434358491942 0.0134306507674496\\
850.448934180268 0.0139422411182384\\
880.488358164346 0.0146064936161858\\
911.588829975082 0.0155013633693567\\
943.787827777538 0.0167687871181084\\
977.12415353465 0.0186972915719382\\
1011.63797976621 0.0219771061110488\\
1047.37089795945 0.0287718731905602\\
1084.36596868961 0.0511175465113251\\
1122.66777351081 0.229931860280657\\
1162.32246867985 0.0230086204740469\\
1203.37784077759 0.00790728760230922\\
1245.88336429501 0.00237507092834955\\
1289.89026125331 0.000212744262406645\\
1335.4515629299 0.00198047559357335\\
1382.62217376466 0.00316081337042616\\
1431.45893752348 0.00400336911352304\\
1482.02070579886 0.00463844023410187\\
1534.36840893001 0.00513700263494255\\
1588.56512942805 0.00554254674346113\\
1644.67617799466 0.00588500721629334\\
1702.7691722259 0.00618992425617828\\
1762.91411809595 0.00649376684076631\\
1825.18349431904 0.00694214359652935\\
1889.65233969121 0.00470654221070942\\
1956.39834351706 0.00648967939825951\\
2025.50193923067 0.00678745782048952\\
2097.04640132323 0.00698063191544076\\
2171.1179456945 0.00714237672190604\\
2247.80583354873 0.00729387777400229\\
2327.20247896041 0.00745859706566531\\
2409.40356023952 0.00755451203608876\\
2494.50813523032 0.00772655285743034\\
2582.61876068267 0.00793717906596166\\
2673.84161583995 0.00819203150900031\\
2768.28663039207 0.00854711510425526\\
2866.06761694825 0.00911285684761923\\
2967.30240818887 0.0102204475393573\\
3072.11299886176 0.0135683056374831\\
3180.62569279412 0.0907488517396037\\
3292.97125509715 0.00104841072542587\\
3409.28506974681 0.00397605001539409\\
3529.70730273065 0.00499815706112479\\
3654.38307095726 0.00551545849833266\\
3783.46261713193 0.00582709918627679\\
3917.10149080926 0.00603466028794924\\
4055.46073584083 0.00618244071215601\\
4198.70708444391 0.0062932405628459\\
4347.01315812503 0.00638052791500653\\
4500.5576757005 0.00645354751287267\\
4659.52566866468 0.00652005967533725\\
4824.10870416537 0.00658861703698229\\
4994.50511585514 0.00667229338787556\\
5170.92024289676 0.00680107019370582\\
5353.56667741072 0.0071343002413614\\
5542.66452066311 0.00658257073821574\\
5738.44164830239 0.0073425629637419\\
5941.13398496503 0.00882148523962969\\
6150.9857885805 0.0288565796752696\\
6368.24994471859 0.00200174101888566\\
6593.18827133355 0.0036915680804308\\
6826.07183427239 0.00414453463882987\\
7067.18127392749 0.00427420925295334\\
7316.8071434272 0.0042589951967546\\
7575.25025877191 0.00415490306267997\\
7842.82206133768 0.0039876211848491\\
8119.84499318401 0.0038482811106437\\
8406.65288561832 0.00313458133713861\\
8703.59136148517 0.00280083655040331\\
9011.01825166502 0.00220851953623942\\
9329.30402628469 0.00137094401892523\\
9658.8322411587 0.000243532378256734\\
10000 0.00217381163208334\\
};
\end{loglogaxis}
\end{tikzpicture}%
 \end{subfigure} 
 \caption{The rows respectively, show the 1st input to 9th output, 9th input to 9th output (left) and the respective relative errors (right) of the full model and 20 dimensional reduced-order model by IRKA.}
 \label{fig:assmodelsingleinput}
\end{figure}
Figure~\ref{fig:assmodelsingleinput} shows a selection of single input to single output mappings comparing the full model and $20$-dimensional reduced-order model and their corresponding relative deviations. As examples, the first input (charge)  to $9^{th}$ output (force), $9^{th}$ input (potential) to $9^{th}$ output (charge) relations of full, and the $20$- dimensional reduced model in the left side and the respective relative errors between the full and reduced model have shown on the right side of the figure~ \ref{fig:assmodelsingleinput}. Details of input-output relations can be found in \cite{morUddSKetal12}.
\begin{table}[]
\begin{center}
\caption{Speed-up comparisons for ROMs against full model by IRKA}
\label{tab:speed_up}
\begin{tabular}{ccc} \hline
	Model & Time per cycle (sec) & Speed-up \\ \hline
	full model(290137) & $100.949071$ & $1$ \\ \hline
	50 dim ROM & $0.011203$ & $9011$ \\ \hline
	40 dim ROM & $0.010584$ & $9538$ \\ \hline
	30 dim ROM & $0.009338$ & $10811$ \\ \hline
	20 dim ROM & $0.008960$ & $11267$ \\ \hline
	10 dim ROM & $0.007948$ & $12701$ \\ \hline
\end{tabular}
\end{center}
\end{table}

Table~\ref{tab:speed_up} represents the speed-up of the frequency responses of ROMs obtained by Algorithm~\ref{alg:daes:irka2} against the full model. For the time-convenient comparison, we have counted the execution time for a single cycle of the frequency responses of the full model and the ROMs. It has been observed that the obtained ROMs can highly accelerate the speed-up of the system execution time.

\setlength{\figwidth}{.75\linewidth}
\setlength{\figheight}{.6\linewidth}
\subsection{Comparison with the Balanced Truncation}
To compare the performances of IRKA and Balanced Truncation, here we have applied \cite[Algorithm 2]{benner2016structure} to the ASS model. In \cite{benner2016structure}, the authors computed different dimensional reduced-order models in different balancing levels. Exemplary, we have considered only 20 dimensional ROMs in different balancing levels; velocity-velocity, position-position, velocity-position, and position-velocity to compare with the same dimensional model obtained by IRKA. Figure~\ref{fig:com} and Figure~\ref{fig:numresult:tcom_bt_irka}, respectively, compare the accuracy and computational time between the IRKA and BT. From the relative deviations of the full model and the reduced-order models, as shown in Figure~\ref{fig:com}, we can observe that the approximation errors between IRKA and BT are almost the same except the one; position-velocity/velocity-position level. Note that in the BT method, the most expensive part is the solution of Lyapunov equations to compute the Gramian factors that are the main ingredients of this method. Once the Gramian factors are in hand, desired dimensional ROMs can be achieved by the same computational cost which has already been exhibited in  Figure~\ref{fig:numresult:tcom_bt_irka}. On the contrary, IRKA is computationally more efficient as long as consider the minimum number of the dimension of ROMs and the number of cycles as well. Figure~\ref{fig:numresult:tcom_bt_irka} also shows that the computational time is significantly increasing if the dimension of ROMs achieved by IRKA gradually increases with a constant number of cycles, i.e., 20 only. However, BT shows the same computational times to compute different dimensional ROMs. 
\begin{figure}[tb]
\begin{center}
\centering
\include{figures/comp_irka_bt20}
\caption{Relative error of 20 dimensional ROMs by the IRKA and BT}
\label{fig:com}
\end{center}
\end{figure}
\begin{figure}[tb]
\centering
\includegraphics[width=12cm,height= 8cm]{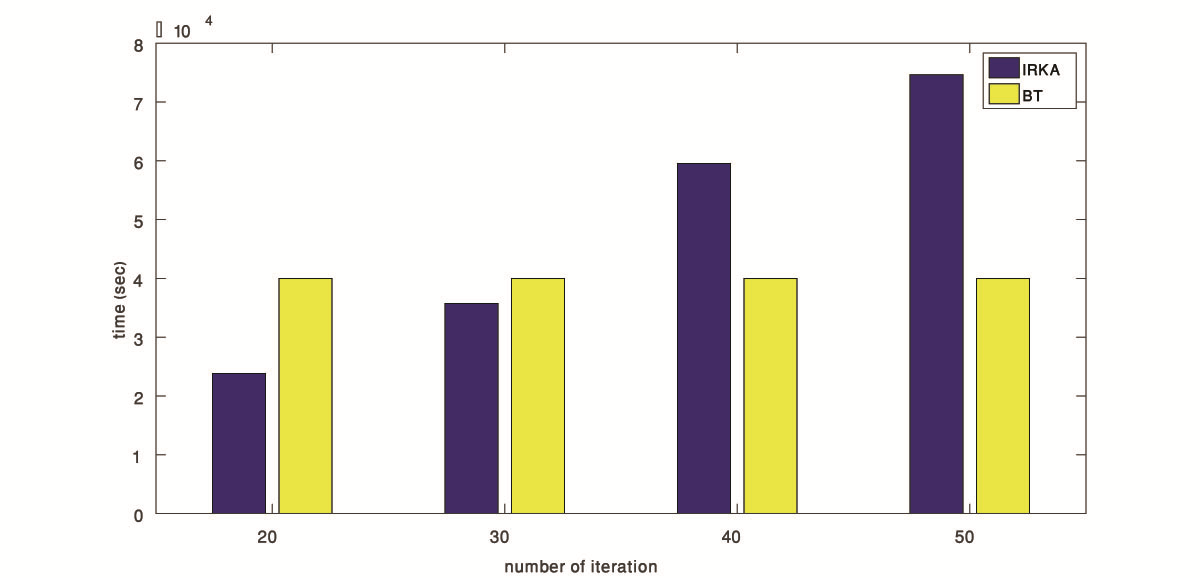}
\caption{Time comparisons for computing ROMs by IRKA and BT methods for the ASS model.}
\label{fig:numresult:tcom_bt_irka}
\end{figure}

\subsection{Stability}
Stability is one of the pivot features of a real-world system. For engineering applications, system stability is one of the fundamental requirements. In general, interpolatory projection methods do not guarantee  the stability of the ROMs. Since the ASS model is symmetric, the left and right transformation matrices, $W_s$ and $V_s$ in (\ref{eq:standard:solution2}) are the same. That is said to so a one-sided projection that guarantees the stability of the system. Figure \ref{fig:eigs} depicts the eigenvalues corresponding to all of the ROMs lie on the left-half plane in a complex domain. This figure also delineates that the successively decreasing dimensional reduced system contains the eigenvalues close to the vertical axis.
\begin{figure}[tb]
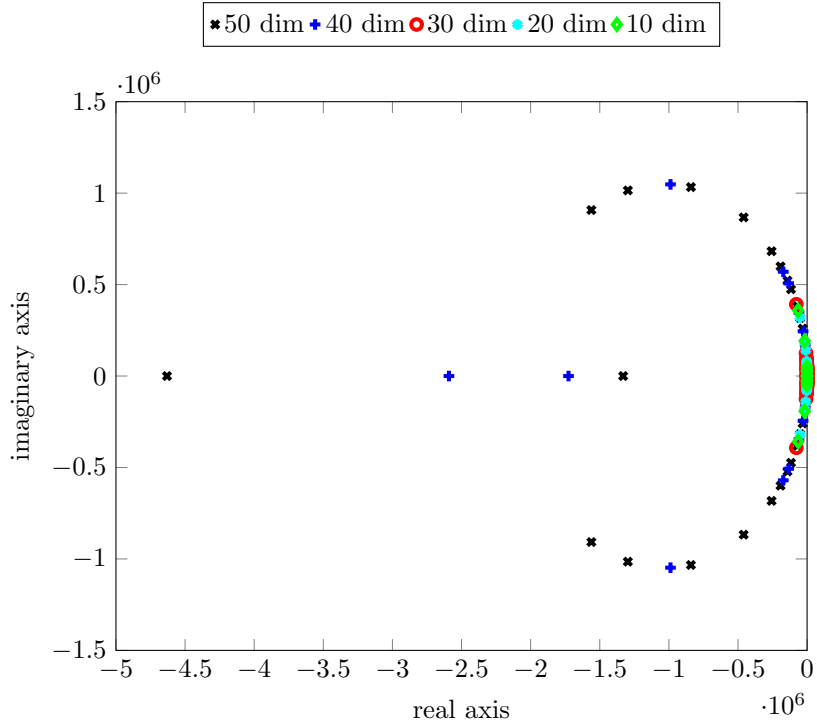

\begin{center}
\centering
\include{figures/eigs}
\caption{Eigenvalue analysis of the different dimensional ROMs computed by Algorithm~\ref{alg:daes:irka2}}
\label{fig:eigs}
\end{center}
\end{figure}

\section{Conclusions}
This paper is devoted to developing the interpolatory tangential method via IRKA for SPMOR of large-scale sparse second-order index-1 DAEs without computing the ODE system (index-0) explicitly. In this context, to modify the classical IRKA, we have discussed the techniques to construct the reduced-order matrices in sparse form by implicitly producing two transformation matrices. For this intention, the selection of interpolation points and tangential directions is a very crucial task that has been efficiently determined. We have also examined that the computational complexity can be reduced drastically for the symmetric system by constructing only one projection matrix with preserving the stability and the symmetry of the system. The performance of the proposed method has been applied to a very large model of an ASS employing piezo-actuators with 29017 DoFs, which manifests the applicability of the proposed method in real-world engineering applications. 

From the numerical computations, it has been investigated that even very lower-dimensional ROMs found by the proposed method preserve the system attributes and input-output behaviors at an acceptable level. The speed-up comparison indicates that the proposed techniques can highly accelerate the performance of the system. The transfer functions of the full model and that of the achieved ROMs are very identical in the frequency domain. Thus the achieved ROMs can be efficiently applied to maintain the production quality of the operational system and optimize the controller design to enhance the performance of the physical model. We have compared the 20 dimensional ROM achieved by IRKA with the ROMs of different levels achieved by BT. The comparison indicates the similarity of the transfer functions and behaviors of the ROMs except the velocity-position/position-velocity level attained by BT. From the time comparison of computing ROMs of different dimensions, it is evident that IRKA performs better than BT for lower-dimensions with a minimum number of iterations. Thus, IRKA provides the ROMs faster than the BT method up to a particular level. The display of the eigenvalues ensures the stability of various ROMs attained by the proposed method.

\section{Acknowledgment}
This work is partially funded by the Bangladesh Bureau of Educational Information and Statistics (BANBEIS) under the project, ID MS20191055. The first author is also a fellow of the University Grants Commission (UGC) of Bangladesh. 

\label{sec:numeric}
\bibliographystyle{IEEEtran}     
\bibliography{morbook}

\end{document}